\theoremstyle{definition} 
\newtheorem{definition}{Definition}[section] 
\theoremstyle{plain} 
\newtheorem{theorem}[definition]{Theorem}
\newtheorem{lemma}[definition]{Lemma}
\newtheorem{corollary}[definition]{Corollary}
\newcommand{\R}{\mathbb{R}} 
\newcommand{\N}{\mathbb{N}} 
\newcommand{\ie}{{\it{i.e.}, }} 
\newcommand{\vareps}{\varepsilon} 
\newcommand{\del}{\partial} 
\newcommand{\n}{\mathbf{n}}  
\newcommand{\tn}{\textnormal} 
\newcommand{\bs}{\boldsymbol} 
\renewcommand{\vec}[1]{\hat{\bs{#1}}} 
\newcommand{\abs}[1]{\left\lvert{#1}\right\rvert}
\renewcommand{\H}{\mathbf{H}}
\begin{document} 
\title{Analyticity of Steklov Eigenvalues of Nearly-Hyperspherical Domains in $\R^{d + 1}$} 
\author{Chee Han Tan}
\address{Department of Mathematics, Wake Forest University, Winston-Salem, NC} 
\email{tanch@wfu.edu} 

\author{Robert Viator} 
\address{Department of Mathematics and Statistics, Swarthmore College, Swarthmore, PA} 
\email{rviator1@swarthmore.edu} 

\subjclass[2010]{26E05, 35C20, 35P05, 41A58} 
\keywords{Dirichlet-to-Neumann operator, Steklov eigenvalues, perturbation theory, hyperspherical coordinates}

\date{\today} 

\begin{abstract} 
We consider the Dirichlet-to-Neumann operator (DNO) on nearly-hyperspherical domains in dimension greater than 3.  Treating such domains as perturbations of the ball, we prove the analytic dependence of the DNO on the shape perturbation parameter for fixed perturbation functions.  Consequently, we conclude that the Steklov eigenvalues are analytic in the shape perturbation parameter as well.  To obtain these results, we use the strategy of Nicholls and Nigam (2004), and of Viator and Osting (2020); we transform the Laplace-Dirichlet problem on the perturbed domain to a more complicated, parameter-dependent equation on the ball, and then geometrically bound the Neumann expansion of the transformed DNO.  These results are a generalization of the work of Viator and Osting (2020) for dimension 2 and 3.
\end{abstract} 

\maketitle 

\section{Introduction} 
For any integer $d\ge 3$, we call $\Omega_\vareps \subset \R^{d+1}$ a \emph{nearly-hyperspherical domain} if it is a small perturbation of the unit $(d + 1)$-ball $B$ in $\R^{d +1}$, given by
\begin{align*} 
\Omega_\varepsilon  & = \left\{(r, \hat{\theta}): 0 \leq r \leq 1+ \varepsilon \rho(\hat{\theta}), \, \hat{\theta} \in S^d \right\}. 
\end{align*} 
Here, $\rho\in C^{s + 2}(S^d)$ is the \emph{domain perturbation function} for some $s\in\N = \{0, 1, 2, \dots\}$, $S^d$ is the unit $d$-sphere in $\R^{d + 1}$, and $\vareps\ge 0$ is the \emph{perturbation parameter} which we assumed to be small in magnitude.

For fixed $\rho \in C^{s+2}(S^d)$, we consider the Steklov eigenvalue problem on the nearly-hyperspherical domain $\Omega_\varepsilon$: 
\begin{equation} \label{Steklov} 
\begin{alignedat}{2}
\Delta u_\vareps & = 0 && \qquad  \tn{ in } \Omega_\vareps, \\
\del_{\n_{\rho, \vareps}} u_\vareps & = \sigma_\vareps u_\vareps && \qquad \tn{ on } \del\Omega_\vareps, 
\end{alignedat}
\end{equation} 
where $\del_{\n_{\rho, \vareps}} = \n_{\rho, \vareps} \cdot \nabla$ is the unit outward normal derivative on the boundary $\partial \Omega_\varepsilon$ of the perturbed domain $\Omega_\varepsilon$. It is well-known \cite{girouard2017} that the Steklov spectrum is discrete, real, and nonnegative, and we arrange them in non-decreasing order $$0 = \sigma_0(\Omega_\vareps)< \sigma_1(\Omega_\varepsilon) \leq \sigma_2(\Omega_\vareps) \leq \dots ,$$ tending to infinity. The Steklov spectrum matches the spectrum of the Dirichlet-to-Neumann operator (DNO), $G_{\rho, \vareps}\colon H^{s + \frac{1}{2}}(\del\Omega_\vareps)\to H^{s - \frac{1}{2}}(\del\Omega_\vareps)$, given by
\begin{equation*}
\xi\mapsto G_{\rho, \vareps}\xi = \n_{\rho, \vareps}\cdot \nabla v_\vareps\rvert_{r = 1 + \varepsilon\rho}, 
\end{equation*} 
where $v_\vareps$ is the \emph{harmonic extension} of $\xi$ to $\Omega_\vareps$, satisfying 
\begin{equation} \label{pertHarmExt}
\begin{alignedat}{2} 
\Delta v_\vareps & = 0 && \qquad \tn{ in } \Omega_\vareps, \\ 
v_\vareps & = \xi && \qquad \tn{ on } \del\Omega_\vareps. 
\end{alignedat}
\end{equation} 

In previous work \cite{viator2020}, it was proven that $\sigma_\vareps$ is analytic with respect to the perturbation parameter $\vareps > 0$ for nearly-circular domain ($d = 1$) and nearly-spherical domain ($d = 2$). The goal of this paper is to show that the same result holds for $d\ge 3$. 

\begin{theorem} \label{thm:DNOAnalytic}
Let $d\ge 3$ and $s\in\N$. If $\rho\in C^{s + 2}(S^d)$, then the Dirichlet-to-Neumann operator, $G_{\rho, \vareps}\colon H^{s + \frac{3}{2}}(\del\Omega_\vareps)\to H^{s + \frac{1}{2}}(\del\Omega_\vareps)$, is analytic in the perturbation parameter $\vareps$. More precisely, if $\rho\in C^{s + 2}(S^d)$, then there exists a Neumann series 
\begin{equation*} 
G_{\rho, \vareps}(\xi) = \sum_{n = 0}^\infty \vareps^nG_{\rho, n}(\xi) 
\end{equation*} 
that converges strongly as an operator from $H^{s + \frac{3}{2}}(S^d)$ to $H^{s + \frac{1}{2}}(S^d)$. That is, there exists constants $K_1 = K_1(B, d, s) > 0$ and $\alpha = \alpha(d) > 1$ such that 
\begin{equation*}
\|G_{\rho, n}\xi\|_{H^{s + \frac{1}{2}}(S^d)}\le K_1\|\xi\|_{H^{s + \frac{3}{2}}(S^d)}A^n 
\end{equation*}
for $A > \alpha\max\{2K_0C_0\abs{\rho}_{C^{s + 2}}, M\abs{\rho}_{C^{s + 2}}\}$.
\end{theorem}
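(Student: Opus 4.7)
The plan is to adapt the Nicholls--Nigam and Viator--Osting strategy to dimension $d+1\ge 4$. First, I would pull back the harmonic extension problem \eqref{pertHarmExt} from $\Omega_\vareps$ to the fixed unit ball $B$ via the radial change of variables $r'=r/(1+\vareps\rho(\hat\theta))$, which flattens the free boundary. In hyperspherical coordinates the Laplacian has the form $r^{-d}\del_r(r^d\del_r)+r^{-2}\Delta_{S^d}$, and the pullback produces a parameter-dependent, variable-coefficient elliptic equation
\begin{equation*}
\left(\Delta+\sum_{j\ge 1}\vareps^j\mathcal{A}_j\right)w_\vareps=0\tn{ in }B,\qquad w_\vareps=\xi\tn{ on }S^d,
\end{equation*}
where each $\mathcal{A}_j$ is a second-order operator whose coefficients are $j$-fold products of $\rho$ and its first two angular derivatives, arising from the Neumann expansion of $1/(1+\vareps\rho)$ and its powers in the transformed metric.

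Next, I would expand $w_\vareps=\sum_{n\ge 0}\vareps^n w_n$ and match powers of $\vareps$. The coefficient $w_0$ is the harmonic extension of $\xi$ to $B$, and for $n\ge 1$ each $w_n$ solves the Poisson problem
\begin{equation*}
\Delta w_n=-\sum_{j=1}^n\mathcal{A}_j w_{n-j}\tn{ in }B,\qquad w_n\rvert_{S^d}=0.
\end{equation*}
Standard elliptic regularity on the ball then yields $\|w_n\|_{H^{s+2}(B)}\le K_0\sum_{j=1}^n\|\mathcal{A}_j w_{n-j}\|_{H^s(B)}$ with a constant $K_0=K_0(B,d,s)$ coming from the fixed-geometry Poisson solver and the trace theorem $H^{s+2}(B)\to H^{s+3/2}(S^d)$ (which supplies the extension of $\xi$ in the $n=0$ step). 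The $n$-th DNO coefficient $G_{\rho,n}\xi$ is then read off from the normal derivative of $w_n$ on $S^d$, after accounting for the analytic-in-$\vareps$ metric factor relating $\del_{\n_{\rho,\vareps}}$ on $\del\Omega_\vareps$ to $\del_r$ on $S^d$.

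The core of the argument is a geometric bound $\|w_n\|_{H^{s+2}(B)}\le K_1\|\xi\|_{H^{s+3/2}(S^d)}A^n$, which I would prove by strong induction on $n$. The inductive step combines the elliptic estimate above with an algebra-type Sobolev estimate of the form
\begin{equation*}
\|\mathcal{A}_j w\|_{H^s(B)}\le C_0\left(M\abs{\rho}_{C^{s+2}}\right)^j\|w\|_{H^{s+2}(B)},
\end{equation*}
where $C_0=C_0(d,s)$ controls products and two derivatives in $H^s$ on the ball, and $M=M(d)$ absorbs the combinatorial weight from the Neumann series of $(1+\vareps\rho)^{-k}$. Substituting the inductive hypothesis and summing the resulting geometric series in $A$ gives $\|w_n\|_{H^{s+2}(B)}\le K_1\|\xi\|_{H^{s+3/2}(S^d)}A^n\cdot K_0C_0\sum_{j=1}^\infty(M\abs{\rho}_{C^{s+2}}/A)^j$, and choosing $A>\alpha(d)\max\{2K_0C_0\abs{\rho}_{C^{s+2}},M\abs{\rho}_{C^{s+2}}\}$ with $\alpha(d)>1$ absorbing the dimension-dependent slack forces the tail to be bounded by $1$, closing the induction.

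The main obstacle, and the principal departure from the $d=1,2$ cases, is the explicit identification and uniform estimation of $\mathcal{A}_j$ for arbitrary $d$: the pullback produces cross terms mixing $\del_r$ with $\Delta_{S^d}/r^2$, weighted by $r^{-d}$ and by rational functions of $\rho$ whose Neumann expansion must be tracked term-by-term. Extracting a clean multiplicative structure $(M\abs{\rho}_{C^{s+2}})^j$ with a single dimension-dependent constant $M$---rather than a $j$-dependent combinatorial blow-up---requires carefully exploiting that $\rho$ appears only through $(1+\vareps\rho)^{-k}$ and $\nabla_{S^d}\rho$, and this is the step where the hyperspherical geometry (as opposed to the classical spherical geometry of \cite{viator2020}) enters most forcefully.
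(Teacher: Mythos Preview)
Your overall strategy is sound and aligns with the paper's, but you take a harder route at the central step. You propose to expand the rational factors $(1+\vareps\rho)^{-k}$ as Neumann series, producing an \emph{infinite} family of perturbation operators $\mathcal{A}_j$ and a convolution-type recursion $\Delta w_n=-\sum_{j=1}^n\mathcal{A}_j w_{n-j}$; you then (correctly) flag as the main obstacle the uniform-in-$j$ control of $\|\mathcal{A}_j w\|_{H^s}$ needed to sum the resulting geometric series. The paper sidesteps this obstacle entirely: because the original equation is $\Delta v=0$, one may multiply the transformed equation by $(1+\vareps\rho)^4$ to clear all denominators, obtaining $\Delta u_\vareps=\vareps L_1u_\vareps+\vareps^2L_2u_\vareps$ with \emph{exactly two} explicit second-order operators $L_1,L_2$; the same trick with $(1+\vareps\rho)^2$ applied to the transformed normal derivative yields two first-order boundary operators $T_1,T_2$. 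The recursion is then simply $\Delta u_n=L_1u_{n-1}+L_2u_{n-2}$, and the induction closes by estimating two terms rather than an infinite sum. Your approach would still succeed---the polynomial-in-$j$ growth of the binomial coefficients is dominated by the geometric decay in $A$---but the denominator-clearing device makes the obstacle you identify evaporate, and it is why the constants in the statement take the specific two-term form $\max\{2K_0C_0\abs{\rho}_{C^{s+2}},M\abs{\rho}_{C^{s+2}}\}$ rather than something arising from a summed series.

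One further point of difference: the DNO step is not merely ``reading off $\del_r w_n$ on $S^d$ and multiplying by an analytic metric factor.'' The transformed normal derivative mixes $\del_r u_\vareps$ with angular derivatives of $u_\vareps$ and with $(1+\vareps\rho)^{-1}$, $(1+\vareps\rho)^{-2}$; the paper runs a \emph{second}, separate induction directly on the DNO coefficients $\widehat G_{\rho,n}$, using a self-referential recursion $\widehat G_{\rho,n}\xi=\del_ru_n+(T_1u_{n-1})-2\rho\widehat G_{\rho,n-1}\xi+(T_2u_{n-2})-\rho^2\widehat G_{\rho,n-2}\xi$ obtained by the same denominator-clearing trick. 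You should make this second induction explicit rather than folding it into the harmonic-extension estimate.
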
 

We prove Theorem \ref{thm:DNOAnalytic} for all $d \geq 3$ simultaneously: the proof can be found in section \ref{sec:DNO}. The proof follows the strategy of \cite{nicholls2004, viator2020}. We first show the analyticity of the harmonic extension, \ie that for fixed $\xi : \partial \Omega_\varepsilon \rightarrow \mathbb{C}$ the solution $v_\varepsilon$ to \eqref{pertHarmExt} is analytic in $\vareps > 0$; see Section \ref{sec:HarmExtAnaly}. Using this and a recursive formula for the DNO $G_{\rho, \varepsilon}$, we then prove that $G_{\rho, \varepsilon}$ also depends analytically on $\vareps > 0$, establishing Theorem \ref{thm:DNOAnalytic}.

The next corollary follows from \cite{Kato} and Theorem \ref{thm:DNOAnalytic}, establishing the analytic dependence on $\varepsilon$ of the Steklov eigenvalues $\sigma_\varepsilon$ of \eqref{Steklov} within the same disk of convergence as in Theorem \ref{thm:DNOAnalytic}:

\begin{corollary} \label{thm:Steklov} 
The Steklov eigenvalues $\sigma_\varepsilon$ of \eqref{Steklov} consist of branches of one or several analytic functions which have at most algebraic singularities near $\varepsilon = 0$.  The same is true of the corresponding eigenprojections.
\end{corollary}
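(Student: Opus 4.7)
The plan is to apply the analytic perturbation theory of Kato \cite{Kato} (in particular the results of Chapter VII on self-adjoint analytic families and the Puiseux-series theory of Chapter II, \S1.8) to the operator family furnished by Theorem \ref{thm:DNOAnalytic}. One technical subtlety is that the DNOs $G_{\rho, \vareps}$ a priori act on different Hilbert spaces $H^{s + 1/2}(\del\Omega_\vareps)$ as $\vareps$ varies. I would handle this by pulling back along the radial diffeomorphism $\hat\theta\mapsto (1 + \vareps\rho(\hat\theta))\hat\theta$, which identifies $\del\Omega_\vareps$ with $S^d$; under this identification, Theorem \ref{thm:DNOAnalytic} is precisely the statement that the pulled-back family is an analytic family of bounded operators $H^{s + 3/2}(S^d)\to H^{s + 1/2}(S^d)$ on a fixed scale of Sobolev spaces. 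The corresponding pulled-back eigenvalue problem preserves the spectrum $\{\sigma_\vareps\}$, since pullback is a similarity transform on the eigenvalue equation.

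Next, I would record the well-known base case $\vareps = 0$: the unit-ball DNO $G_{\rho, 0}$ is diagonal in the basis of hyperspherical harmonics with eigenvalues $\sigma^{(0)}_k = k$ of finite multiplicity equal to the dimension of the space of degree-$k$ harmonics on $S^d$. In particular, $G_{\rho, 0}$ is self-adjoint with compact resolvent and purely discrete spectrum, so each unperturbed eigenvalue is isolated and finitely degenerate. With Theorem \ref{thm:DNOAnalytic} providing analyticity and this classical fact providing the requisite unperturbed spectral structure, Kato's theorems on perturbation of isolated eigenvalues of finite multiplicity apply directly. They yield: the eigenvalues $\sigma_\vareps$ bifurcating from each $\sigma^{(0)}_k$ consist of branches of one or several analytic functions of $\vareps$ whose only possible singularities near $\vareps = 0$ are algebraic branch points (arising when the degeneracy at $\vareps = 0$ splits, giving genuine Puiseux rather than Taylor series); the associated eigenprojections, constructed via the Riesz contour integral of the resolvent, inherit the same analytic-with-algebraic-singularities structure, and the disk of convergence is the same as in Theorem \ref{thm:DNOAnalytic}.

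The main obstacle I anticipate is verifying that the pulled-back family fits Kato's framework as an analytic family of closed operators, since the DNO is an unbounded pseudodifferential operator of order one and the natural $L^2$ inner product on $\del\Omega_\vareps$ depends on $\vareps$ through the boundary volume form. However, Theorem \ref{thm:DNOAnalytic} gives operator-norm bounds on fixed Sobolev spaces on $S^d$ which, combined with standard conjugation of the surface measure by the Jacobian of the radial map (itself analytic in $\vareps$), reduce the question to analyticity of a bounded operator family on a fixed Hilbert space. This is the input Kato's theorem requires, so the obstacle is essentially bookkeeping rather than a substantive difficulty, and the corollary follows.
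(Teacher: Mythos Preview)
Your proposal is correct and follows essentially the same route as the paper: invoke Theorem \ref{thm:DNOAnalytic} for analyticity of the (pulled-back) DNO on the fixed space $S^d$, note that the DNO is self-adjoint (hence closed) on $L^2(S^d)$, and then apply Kato \cite[Ch.~VII, Thm.~1.8]{Kato}. The paper's proof is terser---it simply cites \cite{arendt2014} for self-adjointness and Kato directly---whereas you spell out the pullback identification, the unperturbed spectrum, and the Jacobian bookkeeping, but these are elaborations of the same argument rather than a different one.
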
 

The motivation of this paper is to lay the groundwork for the asymptotic study of Steklov eigenvalues of nearly-hyperspherical domains in $\mathbb{R}^{d+1}$ for $d\ge 3$.  Asymptotic and perturbative methods have already yielded meaningful results in the study of Steklov shape optimization in two and three dimensions \cite{viator2018, viator2022}. There, the authors begin with a nearly-circular domain in $\mathbb{R}^2$ or a nearly-spherical domain in $\mathbb{R}^3$ and, using the analyticity result proved in \cite{viator2020}, demonstrate that the ball is \emph{not} a shape optimizer for a large class of Steklov eigenvalues, while also obtaining a collection of Steklov eigenvalues which \emph{are} (locally) optimized by the ball.  This paper provides the foundation for future work using similar techniques for nearly-hyperspherical domains in dimension greater than three.


\section{Laplacian in Hyperspherical Coordinates in $\R^{d + 1}$} 
Given $d\ge 1$, let $\left(x_1, x_2, \dots, x_{d + 1}\right)$ denote the $(d + 1)$-dimensional Cartesian coordinates. We define the $(d + 1)$-dimensional hyperspherical coordinates $\left(r, \theta_1, \theta_2, \dots, \theta_d\right)$ as follows: 
\begin{align*} 
x_1 & = r\cos\theta_1\sin\theta_2\sin\theta_3\dots \sin\theta_d, \\ 
x_2 & = r\sin\theta_1\sin\theta_2\sin\theta_3\dots \sin\theta_d, \\ 
x_j & = r\cos\theta_{j - 1}\sin\theta_j\sin\theta_{j + 1}\dots \sin\theta_d, \ \ j = 3, 4, \dots, d, \\ 
x_{d + 1} & = r\cos\theta_d, 
\end{align*} 
where $r\ge 0$ is the radius of a $(d + 1)$-dimensional sphere, $0\le \theta_1\le 2\pi$ is the azimuth, and $0\le \theta_2, \theta_3, \dots, \theta_d\le \pi$ are the inclinations. In particular, these correspond to polar coordinates $(x_1, x_2) = (r\cos\theta_1, r\sin\theta_1)$ for $d = 1$ and spherical coordinates $(x_1, x_2, x_3) = (r\cos\theta_1\sin\theta_2, r\sin\theta_1\sin\theta_2, r\cos\theta_2)$ for $d = 2$. 

The hyperspherical coordinates are an orthogonal curvilinear coordinate system in $\R^{d + 1}$. The associated metric tensor $g$ is therefore diagonal, with components 
\begin{equation*} 
g_{ij} = \sum_{k = 1}^{d + 1} \frac{\del x_k}{\del\theta_i}\frac{\del x_k}{\del\theta_j} = h_i^2\delta_{ij}, \ \ 0\le i, j\le d,  
\end{equation*} 
where $\theta_0 = r$ and the scale factors are defined as $h_0 = 1$ and $h_i = r\prod_{k = i + 1}^d \sin\theta_k$ for $i = 1, 2, \dots, d$; the latter includes the empty product, which gives $h_d = r$. The volume element $dV$ in hyperspherical coordinates is given by 
\begin{equation} \label{eq:SCvolume}
dV = \sqrt{\abs{\det{g}}}\, dr\, d\theta_1\dots d\theta_d = \left(r^d\prod_{k = 2}^d \sin^{k - 1}\theta_k\right) dr\, d\theta_1\dots d\theta_d. 
\end{equation} 

For notational convenience, we introduce new variables $\eta_i = h_i/r$ for $i = 1, 2, \dots, d$ and $h = \sqrt{\abs{\det g}}$. The gradient operator in hyperspherical coordinates is defined as 
\begin{equation} \label{eq:SCgrad} 
\nabla = \sum_{i = 0}^d \frac{1}{h_i}\frac{\del}{\del\theta_i}\vec{\theta}_i = \frac{\del}{\del r}\vec{r} + \frac{1}{r}\sum_{i = 1}^d \frac{1}{\eta_i}\frac{\del}{\del\theta_i}\vec{\theta}_i = \frac{\del}{\del r}\vec{r} + \frac{1}{r}\nabla_{S^d}, 
\end{equation} 
where $\vec{r}, \vec{\theta}_1, \vec{\theta}_2, \dots, \vec{\theta}_d$ are orthonormal hyperspherical basis vectors. The Laplacian is given in terms of the metric tensor by 
\begin{equation} \label{eq:SClaplacian_metric} 
\Delta = \frac{1}{h}\sum_{i = 0}^d \frac{\del}{\del\theta_i}\left(\frac{h}{h_i^2}\frac{\del}{\del\theta_i}\right). 
\end{equation} 
Since $h_i$ is independent of $\theta_i$ and $h = h_0h_1\dots h_d$ is separable in hyperspherical coordinates, \eqref{eq:SClaplacian_metric} simplifies to 
\begin{align*} 
\Delta & = \sum_{i = 0}^d \frac{1}{h_i^2}\cdot \frac{1}{h}\frac{\del}{\del\theta_i}\left(h\frac{\del}{\del\theta_i}\right) = \frac{1}{r^d}\frac{\del}{\del r}\left(r^d\frac{\del}{\del r}\right) + \frac{1}{r^2}\Delta_{S^d},  
\end{align*} 
where $\Delta_{S^d}$ is the spherical Laplacian on the unit $d$-sphere $S^d$, given by 
\begin{equation} \label{eq:SCBeltrami} 
\Delta_{S^d} = \sum_{i = 1}^d \frac{1}{\eta_i^2\sin^{i - 1}\theta_i}\frac{\del}{\del\theta_i}\left(\sin^{i - 1}\theta_i\frac{\del}{\del\theta_i}\right). 
\end{equation} 
For $d = 2$, we recover the Laplacian in spherical coordinates $(r, \theta_1, \theta_2) = (r, \phi, \theta)$: 
\begin{equation*}
\Delta = \frac{1}{r^2}\frac{\del}{\del r}\left(r^2\frac{\del}{\del r}\right) + \frac{1}{r^2}\left[\frac{1}{\sin^2\theta}\frac{\del^2}{\del\phi^2} + \frac{1}{\sin\theta}\frac{\del}{\del\theta}\left(\sin\theta\frac{\del}{\del\theta}\right)\right]. 
\end{equation*}


\section{Change of Variables} 
For notational convenience, let $\del_r$ and $\del_i$ denote the partial derivative with respect to $r$ and $\theta_i$, respectively, for $i = 1, 2, \dots, d$. We first consider the problem of harmonically extending a function $\xi(\hat\theta)$ from $\del\Omega_\vareps$ to $\Omega_\vareps$: 
\begin{equation} \label{eq:HarmExt} 
\begin{aligned}
\frac{1}{r^d}\del_r\left(r^d\del_rv\right) + \frac{1}{r^2}\Delta_{S^d}v & = 0 && \qquad \tn{ in } \Omega_\vareps, \\ 
v & = \xi && \qquad \tn{ on } \del\Omega_\vareps. 
\end{aligned} 
\end{equation} 
Following \cite{nicholls2001, nicholls2004, viator2020}, we introduce the change of variables 
\begin{equation} \label{eq:CoV} 
(r', \theta') = \left(\frac{r}{1 + \vareps\rho(\hat\theta)}, \hat\theta\right) 
\end{equation} 
which maps $\Omega_\vareps$ to the unit $(d + 1)$-ball $B\coloneqq \Omega_0$. The partial derivatives in the new coordinates are given by 
\[ \del_r = \frac{1}{1 + \vareps\rho(\theta')}\del_{r'}, \qquad \del_i = \del_{i'} - \frac{\vareps r'\del_{i'}\rho(\theta')}{1 + \vareps\rho(\theta')}\del_{r'}, \ \ i = 1, 2, \dots, d. \] 
The harmonic extension $v$ transforms to  
\begin{align*} 
u_\vareps(r', \theta') = v\left((1 + \vareps\rho(\theta'))r', \theta'\right). 
\end{align*} 
The radial component of the Laplacian transforms to 
\begin{align*} 
\frac{1}{r^d}\del_r\left(r^d\del_rv\right) & = \frac{1}{(1 + \vareps\rho)^d(r')^d}\cdot \frac{1}{1 + \vareps\rho}\del_{r'}\left((1 + \vareps\rho)^d(r')^d\cdot \frac{1}{1 + \vareps\rho}\del_{r'}u_\vareps\right) \\ 
& = \frac{1}{(1 + \vareps\rho)^2(r')^d}\del_{r'}\left((r')^d\del_{r'}u_\vareps\right). 
\end{align*} 
Referring to \eqref{eq:SCBeltrami}, the angular component of the Laplacian transforms to 
\begin{align*} 
\del_i \left(\sin^{i - 1}\theta_i\, \del_iv\right) & = \del_{i'}\left(\sin^{i - 1}\theta_i'\, \del_{i'}u_\vareps\right) - \frac{\vareps r'\sin^{i - 1}\theta_i'\, \del_{i'}\rho}{1 + \vareps\rho}\del_{r'}\del_{i'}u_\vareps \\ 
& \qquad - \vareps r'\del_{i'}\left(\frac{\sin^{i - 1}\theta_i'\, \del_{i'}\rho}{1 + \vareps\rho}\del_{r'}u_\vareps\right) + \frac{\vareps^2\sin^{i - 1}\theta_i'\, (\del_{i'}\rho)^2}{(1 + \vareps\rho)^2} r'\del_{r'}\left(r'\del_{r'}u_\vareps\right) \\ 
& = \del_{i'}\left(\sin^{i - 1}\theta_i'\, \del_{i'}u_\vareps\right) - \frac{\vareps r'\sin^{i - 1}\theta_i'\, \del_{i'}\rho}{1 + \vareps\rho}\del_{r'}\del_{i'}u_\vareps \\ 
& \qquad -\vareps r'\left[{\frac{\sin^{i - 1}\theta_i'\, \del_{i'}\rho}{1 + \vareps\rho}\del_{i'}\del_{r'}u_\vareps} + \left[\frac{\del_{i'}\left(\sin^{i - 1}\theta_i'\, \del_{i'}\rho\right)}{1 + \vareps\rho} - \frac{\vareps \sin^{i - 1}\theta_i'\, (\del_{i'}\rho)^2}{(1 + \vareps\rho)^2}\right]\del_{r'}u_\vareps\right] \\ 
& \qquad + \frac{\vareps^2\sin^{i - 1}\theta_i'\, (\del_{i'}\rho)^2}{(1 + \vareps\rho)^2}\Big[(r')^2\del_{r'}^2u_\vareps + r'\del_{r'}u_\vareps\Big] \\ 
& = \del_{i'}\left(\sin^{i - 1}\theta_i'\, \del_{i'}u_\vareps\right) - \frac{\vareps r'\del_{i'}\left(\sin^{i - 1}\theta_i'\, \del_{i'}\rho\right)}{1 + \vareps\rho}\del_{r'}u_\vareps \\ 
& \qquad - \frac{2\vareps r'\sin^{i - 1}\theta_i'\, \del_{i'}\rho}{1 + \vareps\rho}\del_{i'}\del_{r'}u_\vareps + \frac{\vareps^2(r')^2\sin^{i - 1}\theta_i'\, (\del_{i'}\rho)^2}{(1 + \vareps\rho)^2}\left[\del_{r'}^2u_\vareps + \frac{2}{r'}\del_{r'}u_\vareps\right]. 
\end{align*} 
Substituting the transformed radial and angular components into the Laplace's equation \eqref{eq:HarmExt}, substituting $r^{-2} = (r')^{-2}(1 + \vareps\rho)^{-2}$ in front of $\Delta_{S^d}$, multiplying by $(1 + \vareps\rho)^4$, and dropping the primes on the transformed variables, we obtain 
\begin{align*} 
0 & = (1 + \vareps\rho)^4\Delta v \\ 
& = (1 + \vareps\rho)^2\Delta u_\vareps - \frac{\vareps(1 + \vareps\rho)}{r}(\Delta_{S^d}\rho)\del_ru_\vareps - \frac{2\vareps(1 + \vareps\rho)}{r}\sum_{i = 1}^d \frac{\del_i\rho}{\eta_i^2}\del_i\del_ru_\vareps \\ 
& \qquad + \vareps^2\sum_{i = 1}^d \frac{(\del_i\rho)^2}{\eta_i^2}\Big[\del_r^2u_\vareps + \frac{2}{r}\del_ru_\vareps\Big]. 
\end{align*} 
Finally, collecting terms of order $\vareps$ and $\vareps^2$, we see that $u_\vareps$ satisfies the following transformed harmonic extension problem: 
\begin{equation} \label{eq:HarmExtTran} 
\begin{aligned} 
\Delta u_\vareps & = \vareps L_1u_\vareps + \vareps^2L_2u_\vareps && \qquad \tn{ in } B, \\ 
u_\vareps(1, \theta) & = \xi(\theta) && \qquad \tn{ on } S^d, 
\end{aligned} 
\end{equation} 
where $\theta = (\theta_1, \theta_2, \dots, \theta_d)$ and 
\begin{align*} 
L_1u_\vareps & = -2\rho\Delta u_\vareps + (\Delta_{S^d}\rho)r^{-1}\del_ru_\vareps + 2\sum_{i = 1}^d \frac{\del_i\rho}{\eta_i^2} r^{-1}\del_i\del_ru_\vareps, \\ 
L_2u_\vareps & = -\rho^2\Delta u_\vareps + (\rho\Delta_{S^d}\rho)r^{-1}\del_ru_\vareps + 2\rho\sum_{i = 1}^d \frac{\del_i\rho}{\eta_i^2} r^{-1}\del_i\del_ru_\vareps - \sum_{i = 1}^d \frac{(\del_i\rho)^2}{\eta_i^2}\Big[\del_r^2u_\vareps + 2r^{-1}\del_ru_\vareps\Big] \\ 
& = \rho^2\Delta u_\vareps + \rho L_1u_\vareps - \sum_{i = 1}^d \frac{(\del_i\rho)^2}{\eta_i^2}\Big[\del_r^2u_\vareps + 2r^{-1}\del_ru_\vareps\Big]. 
\end{align*}


\section{Analyticity of Harmonic Extension} \label{sec:HarmExtAnaly} 
In this section we show that the solution $u_\vareps$ of the transformed harmonic extension problem \eqref{eq:HarmExtTran} is analytic with respect to $\vareps$. We begin by formally expanding $u_\vareps$ as a power series in $\vareps$: 
\begin{equation} \label{eq:HarmExtPS}
u_\vareps(r, \theta) = \sum_{n = 0}^\infty u_n(r, \theta)\vareps^n. 
\end{equation} 
Substituting \eqref{eq:HarmExtPS} into \eqref{eq:HarmExtTran} and collecting terms in powers of $\vareps$, we see that $u_0$ satisfies 
\begin{equation} \label{eq:HarmExtPS0}
\begin{aligned}  
\Delta u_0 & = 0 && \qquad \tn{ in } B, \\ 
u_0(1, \theta) & = \xi(\theta) && \qquad \tn{ on } S^d, 
\end{aligned} 
\end{equation} 
and $u_n$ for $n = 1, 2, \dots$ satisfies the following recursive formula: 
\begin{equation} \label{eq:HarmExtPS1}
\begin{aligned}  
\Delta u_n & = L_1u_{n - 1} + L_2u_{n - 2} && \qquad \tn{ in } B, \\ 
u_n(1, \theta) & = 0 && \qquad \tn{ on } S^d. 
\end{aligned} 
\end{equation}

Our first lemma establishes an elliptic estimate for the Poisson's equation with Dirichlet boundary data, which is analogous to \cite[Lemma~3.1]{viator2020}. We now review some basic facts about hyperspherical harmonics before stating Lemma \ref{lemma:Elliptic}; see \cite{Avery} for more details.  

For $\ell = 0, 1, 2, \dots$, a hyperspherical harmonic of order $\ell$ on $S^d$ is the restriction to $S^d$ of a homogeneous harmonic polynomial of degree $\ell$. Let $\H_\ell^d$ denote the space of all hyperspherical harmonics of order $\ell$ on $S^d$. The space $\H_\ell^d$ has an orthonormal basis $\{Y_\ell^m(\theta)\}_{m = 1}^{N(d, \ell)}$ with respect to the $L^2$-inner product over $S^d$, where $N(d, \ell)$ is the dimension of $\H_\ell^d$. It is well-known that each $Y_\ell^m$ is an eigenfunction of $-\Delta_{S^d}$ with corresponding eigenvalue $\ell(\ell + d - 1)$, \ie 
\begin{equation} \label{eq:SpHarmEigs} 
-\Delta_{S^d}Y_\ell^m(\theta) = \ell(\ell + d - 1)Y_\ell^m(\theta), \ \ 1\le m\le N(d, \ell). 
\end{equation} 
A direct computation using Green's identity, together with \eqref{eq:SpHarmEigs}, gives the following integral identity: 
\begin{equation} \label{eq:SpHarmIBP} 
\int_{S^d} \abs{\nabla_{S^d}Y_\ell^m(\theta)}^2 d\sigma_d(\theta) = \int_{S^d} \ell(\ell + d - 1) \abs{Y_\ell^m(\theta)}^2 d\sigma_d(\theta), 
\end{equation} 
where $d\sigma_d$ is the area element of $S^d$ in hyperspherical coordinates, satisfying the relation $dV = d\sigma_d(\theta)\, r^d dr$; see \eqref{eq:SCvolume}. The sum of $\H_\ell^d$ is dense in $L^2(S^d)$, thus any $\xi\in L^2(S^d)$ can be expanded in hyperspherical harmonics: 
\begin{equation} \label{eq:SpHarmExp} 
\xi(\theta) = \sum_{\ell = 0}^\infty \sum_{m = 1}^{N(d, \ell)} \hat\xi(\ell, m)Y_\ell^m(\theta), \ \ \hat\xi(\ell, m)\coloneqq \int_{S^d} \xi(\theta)\overline{Y_\ell^m(\theta)}\, d\sigma_d(\theta). 
\end{equation} 
This leads us to define the Sobolev space $H^s(S^d)$ of order $s\ge 0$ as the subspace of $L^2(S^d)$ with norm 
\begin{equation*} 
\|\xi\|_{H^s(S^d)}^2\coloneqq \sum_{\ell = 0}^\infty \sum_{m = 1}^{N(d, \ell)} \left(1 + \ell(\ell + d - 1)\right)^s\abs{\hat\xi(\ell, m)}^2. 
\end{equation*}

\begin{lemma} \label{lemma:Elliptic} 
Given $d\ge 3$ and $s\in\N$, there exists a constant $K_0 > 0$ such that for any $F\in H^{s - 1}(B)$ and $\xi\in H^{s + \frac{1}{2}}(S^d)$, the solution of 
\begin{alignat*}{2} 
\Delta w(r, \theta) & = F(r, \theta) && \qquad \tn{ in } B, \\ 
w(1, \theta) & = \xi(\theta) && \qquad \tn{ on } S^d, 
\end{alignat*} 
satisfies the following estimate: 
\begin{equation*} 
\|w\|_{H^{s + 1}(B)}\le K_0\left(\|F\|_{H^{s - 1}(B)} + \|\xi\|_{H^{s + \frac{1}{2}}(S^d)}\right). 
\end{equation*} 
\end{lemma}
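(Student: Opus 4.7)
The strategy is to split the inhomogeneous Dirichlet problem into two classical sub-problems and estimate each piece separately. Write $w = v + h$, where $h$ is the harmonic extension of $\xi$ to $B$ (so $\Delta h = 0$ in $B$ with $h\rvert_{S^d} = \xi$) and $v$ solves the homogeneous Dirichlet problem $\Delta v = F$ in $B$, $v\rvert_{S^d} = 0$. By the triangle inequality it suffices to establish the two separate bounds
\[
\|h\|_{H^{s+1}(B)} \le C_1\|\xi\|_{H^{s + \frac{1}{2}}(S^d)}, \qquad \|v\|_{H^{s+1}(B)}\le C_2\|F\|_{H^{s-1}(B)},
\]
and then take $K_0 = C_1 + C_2$.

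For the harmonic piece $h$, I would use the explicit representation via hyperspherical harmonics developed immediately above the lemma. Expanding $\xi$ as in \eqref{eq:SpHarmExp}, separation of variables together with boundedness at the origin yields $h(r,\theta) = \sum_{\ell,m} \hat\xi(\ell,m)\, r^\ell Y_\ell^m(\theta)$. Computing $\|h\|_{H^{s+1}(B)}^2$ then amounts to applying derivatives of total order at most $s+1$ (expressed in hyperspherical form via \eqref{eq:SCgrad}), using \eqref{eq:SpHarmEigs} and \eqref{eq:SpHarmIBP} to convert angular derivatives into powers of $\ell(\ell+d-1)$, and integrating the radial factors against the volume element $dV = r^d\, dr\, d\sigma_d$ from \eqref{eq:SCvolume}. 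The key mechanism is that the radial integrals $\int_0^1 r^{2\ell + d + 2k}\, dr = (2\ell + d + 2k + 1)^{-1}$ supply a factor of order $(1+\ell)^{-1}$, which provides exactly the half-order gain needed to match the $H^{s+\frac{1}{2}}(S^d)$ norm of $\xi$.

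For the inhomogeneous part $v$, I would invoke the standard elliptic regularity theory for the Dirichlet Laplacian on the smooth, bounded, strictly convex domain $B$: the Agmon-Douglis-Nirenberg theory (or, equivalently, iterated Schauder bootstrapping from the variational solution) gives the required estimate for every integer $s \ge 0$. The base case $s = 0$ is classical: view $F \in H^{-1}(B)$ as a continuous functional on $H^1_0(B)$ and apply Lax-Milgram together with the Poincar\'e inequality. Combining the two estimates via the triangle inequality completes the argument.

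The main technical point is the endpoint $s = 0$, where $F$ lies only in $H^{-1}(B)$ and must be interpreted distributionally rather than as a classical source; the cleanest treatment is to quote the standard Dirichlet regularity theorem on a smooth domain or to supply the one-line Lax-Milgram argument explicitly. The hyperspherical-harmonic bookkeeping for $h$ is essentially routine, but some care is required to track the radial combinatorial factor $(2\ell + d + 1)^{-1}$ and verify that it is uniformly of order $(1+\ell)^{-1}$ for all $d \ge 3$, since this factor is precisely what encodes the trace-theorem gain $H^{s+1}(B) \hookrightarrow H^{s + \frac{1}{2}}(S^d)$ in this setting.
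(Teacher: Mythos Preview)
Your proposal is correct and follows essentially the same route as the paper: the decomposition $w = h + v$ into harmonic extension plus zero-Dirichlet Poisson solution, the explicit hyperspherical-harmonic computation for $h$ (using \eqref{eq:SpHarmIBP} and the radial integrals to recover the half-order gain), and the Lax--Milgram/Poincar\'e argument for $v$ at $s=0$ are exactly what the paper does. The only cosmetic differences are that the paper writes out only the $s=0$ case and takes $K_0 = \max\{C_B, C_{1/2}\}$ rather than a sum.
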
 
\begin{proof} 
We will prove the result for $s = 0$ only, as the proof for $s = 1, 2, \dots$ is similar. Since $\xi\in H^{\frac{1}{2}}(S^d)$, $\xi$ has a hyperspherical harmonic expansion \eqref{eq:SpHarmExp}. Let us decompose $w = \Phi + V$ where 
\begin{equation*} 
\Phi(r, \theta) = \sum_{\ell = 0}^\infty\sum_{m = 1}^{N(d, \ell)} \hat\xi(\ell, m)r^{\ell}Y_\ell^m(\theta). 
\end{equation*} 
It is clear that $\Phi$ is harmonic in $B$ and $\Phi = \xi$ on $S^d$, which means $V$ solves the following boundary value problem: 
\begin{subequations} 
\begin{alignat}{2} 
\label{eq:EllipticV1} \Delta V(r, \theta) & = F(r, \theta) && \qquad \tn{ in } B, \\ 
\label{eq:EllipticV2} V(1, \theta) & = 0 && \qquad \tn{ on } S^d. 
\end{alignat} 
\end{subequations} 
Multiplying \eqref{eq:EllipticV1} by $\overline{V}$ and integrating by parts using \eqref{eq:EllipticV2} yields 
\begin{align*} 
\|V\|_{H_0^1(B)}^2 \coloneqq \|\nabla V\|_{L^2(B)}^2 = -\langle F, V\rangle_{H^{-1}(B), H_0^1(B)},
\end{align*} 
where the latter denotes the duality pairing between $H_0^1(B)$ and $H^{-1}(B)$. Using Poincar\'e inequality and duality, there exists a constant $C_B > 0$ such that 
\begin{equation} \label{eq:Elliptic1}
\|V\|_{H^1(B)}\le C_B\|V\|_{H_0^1(B)} \le C_B\|F\|_{H^{-1}(B)}. 
\end{equation} 

It remains to show that $\|\Phi\|_{H^1(B)}$ is controlled by $\|\xi\|_{H^{\frac{1}{2}}(S^d)}$. Since $Y_\ell^m$ is orthonormal in $L^2(S^d)$, we obtain 
\begin{align*} 
\|\Phi\|_{L^2(B)}^2 & = \sum_{\ell, m} \abs{\hat\xi(\ell, m)}^2 \int_0^1\left(\int_{S^d} r^{2\ell}\abs{Y_\ell^m(\theta)}^2 d\sigma_d(\theta)\right) r^d dr \\ 
& = \sum_{\ell, m} \abs{\hat\xi(\ell, m)}^2 \int_0^1 r^{2\ell + d}\, dr = \sum_{\ell, m} \left[\frac{1}{2\ell + d + 1}\right]\abs{\hat\xi(\ell, m)}^2. 
\end{align*} 
Similarly, using \eqref{eq:SCgrad} to compute $\nabla\Phi$ and using the integral identity \eqref{eq:SpHarmIBP}, we obtain 
\begin{align*} 
\|\nabla\Phi\|_{L^2(B)}^2 & = \sum_{\ell, m} \abs{\hat\xi(\ell, m)}^2 \int_0^1\left(\int_{S^d} \left(\ell^2 r^{2(\ell - 1)}\abs{Y_\ell^m(\theta)}^2 + \frac{r^{2\ell}}{r^2}\abs{\nabla_{S^d}Y_\ell^m(\theta)}^2\right) d\sigma_d(\theta)\right) r^d dr \\ 
& \stackrel{\eqref{eq:SpHarmIBP}}{=} \sum_{\ell, m} \abs{\hat\xi(\ell, m)}^2 \int_0^1\left(\int_{S^d} r^{2(\ell - 1)}\left(\ell^2 + \ell(\ell + d - 1)\right)\abs{Y_\ell^m(\theta)}^2 d\sigma_d(\theta)\right) r^d dr \\ 
& = \sum_{\ell, m} \abs{\hat\xi(\ell, m)}^2\int_0^1 r^{2(\ell - 1) + d}\left(\ell^2 + \ell(\ell + d - 1)\right) dr \\ 
& = \sum_{\ell, m} \left[\frac{\ell^2 + \ell(\ell + d - 1)}{2\ell + d - 1}\right] \abs{\hat\xi(\ell, m)}^2. 
\end{align*} 
This allows us to estimate $\|\Phi\|_{H^1(B)}^2 = \|\Phi\|_{L^2(B)}^2 + \|\nabla\Phi\|_{L^2(B)}^2$ as 
\begin{equation} \label{eq:Elliptic2} 
\|\Phi\|_{H^1(B)}^2 = \sum_{\ell, m} \left[\frac{1}{2\ell + d + 1} + \frac{\ell^2 + \ell(\ell + d - 1)}{2\ell + d - 1}\right] \abs{\hat\xi(\ell, m)}^2 \le C_{\frac{1}{2}}^2 \|\xi\|_{H^{\frac{1}{2}}(S^d)}^2, 
\end{equation} 
for some constant $C_{\frac{1}{2}} > 0$. Finally, using $w = \Phi + V$, the desired estimate for $s = 0$ follows from \eqref{eq:Elliptic1} and \eqref{eq:Elliptic2} by choosing $K_0 = \max\left\{C_B, C_{\frac{1}{2}}\right\}$. 
\end{proof}

Next we derive recursive estimates for the right hand side of \eqref{eq:HarmExtPS1}, which is analogous to \cite[Lemma~3.2]{viator2020}. We shall use the following inequalities from \cite{nicholls2001, nicholls2004}: Given an integer $s\ge 0$ and $\delta > 0$, if $f\in C^s(S^d)$, $u\in H^s(B)$, $g\in C^{s + \frac{1}{2} + \delta}(S^d)$, and $\mu\in H^{s + \frac{1}{2}}(S^d)$, there exists a constant $M = M(d, s)$ such that 
\begin{subequations} 
\begin{align} 
\label{eq:Multiplier1} \|fu\|_{H^s(B)} & \le M(d, s)\abs{f}_{C^s}\|u\|_{H^s(B)},  \\ 
\label{eq:Multiplier2} \|g\mu\|_{H^{s + \frac{1}{2}}(S^d)} & \le M(d, s) \abs{g}_{C^{s + \frac{1}{2} + \delta}}\|\mu\|_{H^{s + \frac{1}{2}}(S^d)}. 
\end{align} 
\end{subequations} 
Here, $C^k(S^d)$ is the space of all $C^k$-functions on $S^d$ for integers $k\ge 0$ and the H\"older space for nonintegers $k$.

\begin{lemma} \label{lemma:RHS} 
Given $d\ge 3$ and $s\in\N$, let $\rho\in C^{s + 2}(S^d)$. Assume that $K_1$ and $A$ are constants so that 
\begin{equation} \label{eq:RHSAssumption} 
\|u_n\|_{H^{s + 2}(B)}\le K_1A^n \qquad \tn{ for all } n < N. 
\end{equation} 
If $A > M(d, s)\abs{\rho}_{C^{s + 2}}$, then there exists a constant $C_0 = C_0(d, s) > 0$ such that 
\begin{align*} 
\|L_1u_{N - 1}\|_{H^s(B)} & \le C_0\abs{\rho}_{C^{s + 2}}K_1A^{N - 1}, \\ 
\|L_2u_{N - 2}\|_{H^s(B)} & \le C_0\abs{\rho}_{C^{s + 2}}K_1A^{N - 1}. 
\end{align*} 
\end{lemma}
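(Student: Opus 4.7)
My plan is to estimate each summand in the formulas for $L_1 u_{N-1}$ and $L_2 u_{N-2}$ separately, by combining the multiplier inequality \eqref{eq:Multiplier1} on the coefficient--solution products with the inductive hypothesis \eqref{eq:RHSAssumption}. The consolidation into the stated geometric-series form is then forced by the hypothesis $A > M(d,s)\abs{\rho}_{C^{s+2}}$.

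For the $L_1$ bound, I observe that every coefficient appearing in $L_1 u_{N-1}$ --- namely $\rho$, $\Delta_{S^d}\rho$, and the components $\eta_i^{-2}\del_i\rho$ --- is built from derivatives of $\rho$ of total order at most two, hence lies in $C^s(S^d)$ with norm controlled by a constant multiple of $\abs{\rho}_{C^{s+2}}$. The apparently singular factor $\eta_i^{-2}$ is harmless because the sum $\sum_i \eta_i^{-2}(\del_i \rho)\del_i$ is the intrinsic spherical-gradient contraction $\nabla_{S^d}\rho\cdot\nabla_{S^d}$, a smooth differential operator on $S^d$. The three operators acting on $u_{N-1}$ --- $\Delta$, $r^{-1}\del_r$, and $r^{-1}\del_i\del_r$ --- are of order at most two, so each maps $H^{s+2}(B)$ boundedly into $H^s(B)$. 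Applying \eqref{eq:Multiplier1} to each term (treating the coefficient as radially constant on $B$) and summing produces
\[
\|L_1 u_{N-1}\|_{H^s(B)}\le C_0' M \abs{\rho}_{C^{s+2}} \|u_{N-1}\|_{H^{s+2}(B)} \le C_0' M \abs{\rho}_{C^{s+2}} K_1 A^{N-1}
\]
for some $C_0' = C_0'(d,s) > 0$.

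For the $L_2$ bound, I exploit the structural decomposition
\[
L_2 u_{N-2} = \rho^2 \Delta u_{N-2} + \rho\, L_1 u_{N-2} - \sum_{i=1}^d \frac{(\del_i \rho)^2}{\eta_i^2}\bigl[\del_r^2 u_{N-2} + 2r^{-1}\del_r u_{N-2}\bigr].
\]
The first summand is controlled by two applications of \eqref{eq:Multiplier1}; the middle summand combines \eqref{eq:Multiplier1} (with multiplier $\rho$) with the $L_1$ bound just established, applied at index $N-2$; the last summand uses the identification $\sum_i \eta_i^{-2}(\del_i\rho)^2 = \abs{\nabla_{S^d}\rho}^2$, whose $C^s$-norm is bounded by a constant times $\abs{\rho}_{C^{s+2}}^2$. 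Each contribution then has the form (constant) $\times \abs{\rho}_{C^{s+2}}^2 \times K_1 A^{N-2}$. The hypothesis $A > M\abs{\rho}_{C^{s+2}}$ gives $\abs{\rho}_{C^{s+2}} < A/M$, so one factor of $\abs{\rho}_{C^{s+2}}$ can be absorbed into a factor of $A/M$, yielding a bound of the form $\widetilde{C}_0 \abs{\rho}_{C^{s+2}} K_1 A^{N-1}$. Taking $C_0 = \max\{C_0' M, \widetilde{C}_0\}$ gives both desired inequalities simultaneously.

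The main technical wrinkle is the treatment of the coordinate-singular weights $\eta_i^{-2}$ and the factor $r^{-1}$. The former are subsumed into the intrinsic tensorial objects $\nabla_{S^d}\rho \cdot \nabla_{S^d}$ and $\abs{\nabla_{S^d}\rho}^2$, so they pose no real difficulty once the coefficient groupings are made explicit. The $r^{-1}$-factor requires a uniform bound of Hardy type, $\|r^{-1}\del_r u\|_{H^s(B)}\lesssim \|u\|_{H^{s+2}(B)}$, whose verification on $H^{s+2}(B)$-functions follows from standard interior regularity together with the fact that $\del_r u$ vanishes at the origin in the appropriate sense. With those two points dispatched, the proof reduces entirely to a direct, term-by-term invocation of the multiplier estimate \eqref{eq:Multiplier1}.
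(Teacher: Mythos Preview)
Your proposal is correct and follows essentially the same route as the paper's proof: term-by-term application of the multiplier estimate \eqref{eq:Multiplier1} to bound $L_1u_{N-1}$ by a constant times $|\rho|_{C^{s+2}}\|u_{N-1}\|_{H^{s+2}(B)}$, then the same structural decomposition $L_2u_{N-2}=\rho^2\Delta u_{N-2}+\rho L_1u_{N-2}-\sum_i\eta_i^{-2}(\del_i\rho)^2[\del_r^2+2r^{-1}\del_r]u_{N-2}$ together with the absorption $|\rho|_{C^{s+2}}<A/M$. The only cosmetic difference is that where you package the singular weights via the intrinsic objects $\nabla_{S^d}\rho\cdot\nabla_{S^d}$ and $|\nabla_{S^d}\rho|^2$, the paper instead splits each $\eta_i^{-2}$ as $\eta_i^{-1}\cdot\eta_i^{-1}$, grouping one factor with $\del_i\rho$ (a first-order spherical derivative) and the other with $\del_iu$ so that the operator on $u$ becomes $h_i^{-1}\del_i\del_r$; this is exactly the componentwise unpacking of your intrinsic expression, and the paper records the resulting explicit constant $C_0=(4+5d)M$. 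Your flagging of the Hardy-type bound $\|r^{-1}\del_r u\|_{H^s(B)}\lesssim\|u\|_{H^{s+2}(B)}$ is honest bookkeeping that the paper leaves implicit.
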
 
\begin{proof} 
For notation convenience, we suppress the dependence of $M$ on $d$ and $s$. Throughout this proof we will use the fact that both $\abs{\rho}_{C^s}$ and $\abs{\rho}_{C^{s + 1}}$ are bounded above by $\abs{\rho}_{C^{s + 2}}$. First, a close inspection on $L_1u_{N - 1}$ reveals that all differential operators acting on $u_{N - 1}$ are second order, since we may write $L_1u_{N - 1}$ as follows: 
\begin{align*} 
L_1u_{N - 1} = -2\rho\cdot \Delta u_{N- 1} + (\Delta_{S^d}\rho)\cdot \frac{\del_ru_{N - 1}}{r} + 2\sum_{i = 1}^d \frac{\del_i\rho}{\eta_i}\cdot \frac{1}{h_i}\del_i\del_ru_{N - 1}. 
\end{align*} 
Estimating each term in $L_1u_{N-1}$ using the inequality \eqref{eq:Multiplier1} and noting that $\del_i\rho/\eta_i$ is a first order partial derivative on $S^d$, we obtain 
\begin{align*} 
\|L_1u_{N - 1}\|_{H^s(B)} & \le \|2\rho\Delta u_{N - 1}\|_{H^s(B)} + \|(\Delta_{S^d}\rho)\cdot r^{-1}\del_ru_{N - 1}\|_{H^s(B)} \\ 
& \qquad + 2\sum_{i = 1}^d \left\|\frac{\del_i\rho}{\eta_i}\cdot \frac{1}{h_i}\del_i\del_ru_{N - 1}\right\|_{H^s(B)} \\ 
& \stackrel{\eqref{eq:Multiplier1}}{\le} 2M\abs{\rho}_{C^s}\|\Delta u_{N - 1}\|_{H^s(B)} + M\abs{\Delta_{S^d}\rho}_{C^s}\|r^{-1}\del_ru_{N - 1}\|_{H^s(B)} \\ 
& \qquad + 2\sum_{i = 1}^d M\abs{\eta_i^{-1}\del_i\rho}_{C^s}\left\|(h_i)^{-1}\del_i\del_ru_{N - 1}\right\|_{H^s(B)} \\ 
& \le 2M\abs{\rho}_{C^s}\|u_{N - 1}\|_{H^{s + 2}(B)} + M\abs{\rho}_{C^{s + 2}}\|u_{N - 1}\|_{H^{s + 2}(B)} \\ 
& \qquad + 2\sum_{i = 1}^d M\abs{\rho}_{C^{s + 1}}\|u_{N - 1}\|_{H^{s + 2}(B)} \\ 
& \le (3 + 2d)M\abs{\rho}_{C^{s + 2}}\|u_{N - 1}\|_{H^{s + 2}(B)} \\ 
& \stackrel{\eqref{eq:RHSAssumption}}{\le} (3 + 2d)M\abs{\rho}_{C^{s + 2}}K_1A^{N - 1}. 
\end{align*} 
The estimate for $L_2u_{N - 2}$ is similar. Estimating the second term $\rho L_1u_{N - 2}$ in $L_2u_{N - 2}$ using the inequality \eqref{eq:Multiplier1} once and adapting the estimate for $L_1u_{N - 1}$ from above, we obtain 
\begin{align*} 
\|\rho L_1u_{N - 2}\|_{H^s(B)} & \le M\abs{\rho}_{C^s}\|L_1u_{N - 2}\|_{H^s(B)} \le (3 + 2d) M^2\abs{\rho}_{C^{s + 2}}^2 K_1A^{N - 2}. 
\end{align*} 
Estimating the remaining terms in $L_2u_{N - 2}$ using the inequality \eqref{eq:Multiplier1} twice and noticing that all differential operators acting on $u_{N - 2}$ are second order, we obtain 
\begin{align*} 
\|L_2u_{N - 2}\|_{H^s(B)} & \le \|\rho L_1u_{N - 2}\|_{H^s(B)} + \|\rho^2\Delta u_{N - 2}\|_{H^s(B)} \\ 
& \qquad + \sum_{i = 1}^d \left\|\left(\frac{\del_i\rho}{\eta_i}\right)^2\left[\del_r^2u_{N - 2} + 2r^{-1}\del_ru_{N - 2}\right]\right\| \\ 
& \stackrel{\eqref{eq:Multiplier1}}{\le} (3 + 2d) M^2\abs{\rho}_{C^{s + 2}}^2 K_1A^{N - 2} + M^2\abs{\rho}_{C^s}^2\|\Delta u_{N - 2}\|_{H^s(B)} \\ 
& \qquad + \sum_{i = 1}^d M^2\abs{\eta_i^{-1}\del_i\rho}_{C^s}^2\left(\|\del_r^2u_{N - 2}\|_{H^s(B)} + 2\|r^{-1}\del_ru_{N - 2}\|_{H^s(B)}\right) \\ 
& \le (3 + 2d) M^2\abs{\rho}_{C^{s + 2}}^2 K_1A^{N - 2} + M^2\abs{\rho}_{C^s}^2\|u_{N - 2}\|_{H^{s + 2}(B)} \\ 
& \qquad + \sum_{i = 1}^d M^2\abs{\rho}_{C^{s + 1}}^2\left(3\|u_{N - 2}\|_{H^{s + 2}(B)}\right) \\ 
& \le (3 + 2d) M^2\abs{\rho}_{C^{s + 2}}^2 K_1A^{N - 2} + (1 + 3d) M^2\abs{\rho}_{C^{s + 2}}^2\|u_{N - 2}\|_{H^{s + 2}(B)} \\ 
& \stackrel{\eqref{eq:RHSAssumption}}{\le} (3 + 2d) M^2\abs{\rho}_{C^{s + 2}}^2 K_1A^{N - 2} + (1 + 3d) M^2\abs{\rho}_{C^{s + 2}}^2 K_1A^{N - 2} \\ 
& \le (4 + 5d) M\abs{\rho}_{C^{s + 2}} K_1A^{N - 1}, 
\end{align*} 
assuming $A > M\abs{\rho}_{C^{s + 2}}$. Finally, the desired result follows by choosing $C_0 = (4 + 5d)M$. 
\end{proof} 

Finally, we justify the convergence of \eqref{eq:HarmExtPS} for sufficiently small $\vareps > 0$.  

\begin{theorem} \label{thm:HarmExtTran_Main} 
Given $d\ge 3$ and $s\in\N$, if $\rho\in C^{s + 2}(S^d)$ and $\xi\in H^{s + \frac{3}{2}}(S^d)$, there exists constants $C_0$ and $K_0$ and a unique solution $u_\vareps$ of \eqref{eq:HarmExtTran} satisfying \eqref{eq:HarmExtPS} such that 
\begin{equation} \label{eq:HarmExtTran_Main} 
\|u_n\|_{H^{s + 2}(B)}\le K_0\|\xi\|_{H^{s + \frac{3}{2}}(S^d)}A^n 
\end{equation} 
for any $A > \max\{2K_0C_0\abs{\rho}_{C^{s + 2}}, M\abs{\rho}_{C^{s + 2}}\}$. 
\end{theorem}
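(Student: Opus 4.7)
The plan is to prove the bound \eqref{eq:HarmExtTran_Main} by induction on $n$, using Lemma \ref{lemma:Elliptic} to bound each $u_n$ by the $H^s$-norm of the source term in its Poisson equation, and Lemma \ref{lemma:RHS} to control that source term in terms of the preceding $u_m$ ($m < n$). Once the inductive bound is established, convergence of the series \eqref{eq:HarmExtPS} in $H^{s+2}(B)$ for $|\vareps| < 1/A$ is immediate by comparison with the geometric series, and uniqueness follows from the uniqueness of each Dirichlet problem on $B$ together with invertibility of the change of variables \eqref{eq:CoV} for small $\vareps$.

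For the base case $n = 0$, I would apply Lemma \ref{lemma:Elliptic} with its $s$ shifted to $s + 1$ and $F \equiv 0$ to \eqref{eq:HarmExtPS0}, which gives $\|u_0\|_{H^{s+2}(B)} \le K_0 \|\xi\|_{H^{s + \frac{3}{2}}(S^d)}$; this matches \eqref{eq:HarmExtTran_Main} at $n = 0$ with the assignment $K_1 := K_0 \|\xi\|_{H^{s + \frac{3}{2}}(S^d)}$, which is precisely the form required by the hypothesis of Lemma \ref{lemma:RHS}. For the inductive step, assume \eqref{eq:HarmExtTran_Main} holds for all $n < N$, adopting the convention $u_{-1} \equiv 0$ so that the case $N = 1$ fits the unified recursion $\Delta u_n = L_1 u_{n-1} + L_2 u_{n-2}$ (with zero boundary data for $n \ge 1$). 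Applying Lemma \ref{lemma:Elliptic} (again shifted to regularity $s + 1$) to \eqref{eq:HarmExtPS1} yields
\[
\|u_N\|_{H^{s+2}(B)} \le K_0 \bigl(\|L_1 u_{N-1}\|_{H^s(B)} + \|L_2 u_{N-2}\|_{H^s(B)}\bigr),
\]
and Lemma \ref{lemma:RHS} (whose hypothesis $A > M \abs{\rho}_{C^{s+2}}$ is enforced by the choice of $A$) bounds the right-hand side above by $2 K_0 C_0 \abs{\rho}_{C^{s+2}} K_1 A^{N-1}$. The condition $A > 2 K_0 C_0 \abs{\rho}_{C^{s+2}}$ is exactly what is needed to absorb this constant into one additional power of $A$, producing $\|u_N\|_{H^{s+2}(B)} \le K_1 A^N$ and closing the induction.

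Once the geometric bound is secured, termwise differentiation together with the recursion verifies that the limit of \eqref{eq:HarmExtPS} solves \eqref{eq:HarmExtTran} in $B$ with boundary data $\xi$. Uniqueness of each $u_n$ is immediate from uniqueness for the Poisson problem on $B$, and uniqueness of $u_\vareps$ itself follows by pulling back via \eqref{eq:CoV} to the harmonic extension on $\Omega_\vareps$, which is classical. The proof is essentially bookkeeping given Lemmas \ref{lemma:Elliptic} and \ref{lemma:RHS}; the only real subtlety is aligning the regularity shift (applying Lemma \ref{lemma:Elliptic} at level $s + 1$ so as to match the $H^s(B)$ output of Lemma \ref{lemma:RHS}) and tracking the multiplicative constants so that the binding threshold in the geometric iteration comes out as $A > 2 K_0 C_0 \abs{\rho}_{C^{s+2}}$, exactly as stated.
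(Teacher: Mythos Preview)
Your proposal is correct and follows essentially the same approach as the paper's proof: induction on $n$, with the base case handled by Lemma~\ref{lemma:Elliptic} (applied at regularity level $s+1$) and the inductive step combining Lemma~\ref{lemma:Elliptic} with Lemma~\ref{lemma:RHS} using $K_1 = K_0\|\xi\|_{H^{s+\frac32}(S^d)}$, so that the condition $A > 2K_0C_0|\rho|_{C^{s+2}}$ closes the induction. Your discussion of the regularity shift, the convention $u_{-1}\equiv 0$, and the uniqueness/convergence remarks are slightly more explicit than the paper's terse version, but the argument is the same.
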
 
\begin{proof} 
The proof is similar to \cite[Theorem~3.3]{viator2020}. We proceed by induction. The case $n = 0$ follows from applying Lemma \ref{lemma:Elliptic} to $u_0$ satisfying \eqref{eq:HarmExtPS0}, with $F\equiv 0$: 
\begin{align*} 
\|u_0\|_{H^{s + 2}(B)}\le K_0\|\xi\|_{H^{s + \frac{3}{2}}(S^d)} = K_0\|\xi\|_{H^{s + \frac{3}{2}}(S^d)}A^0. 
\end{align*} 
Suppose inequality \eqref{eq:HarmExtTran_Main} holds for $n < N$. Applying Lemma \ref{lemma:Elliptic} to $u_N$ satisfying \eqref{eq:HarmExtPS1}, with $F = L_1u_{N - 1} + L_2u_{N - 2}$, we obtain 
\begin{align*} 
\|u_N\|_{H^{s + 2}(B)}\le K_0\left(\|L_1u_{N - 1}\|_{H^s(B)} + \|L_2u_{N - 2}\|_{H^s(B)}\right). 
\end{align*} 
Using Lemma \ref{lemma:RHS} with $K_1 = K_0\|\xi\|_{H^{s + \frac{3}{2}}(S^d)}$ from the induction hypothesis, we may bound $\|L_1u_{N - 1}\|_{H^s(B)}$ and $\|L_2u_{N - 2}\|_{H^s(B)}$ so that 
\begin{align*} 
\|u_N\|_{H^{s + 2}(B)} \le K_0\left(2C_0\abs{\rho}_{C^{s + 2}} K_1A^{N - 1}\right) & = 2K_0^2C_0\abs{\rho}_{C^{s + 2}}\|\xi\|_{H^{s + \frac{3}{2}}(S^d)}A^{N - 1} \\ 
& \le K_0\|\xi\|_{H^{s + \frac{3}{2}}(S^d)} A^N, 
\end{align*} 
thereby establishing \eqref{eq:HarmExtTran_Main} for $n = N$, provided $A > 2K_0C_0\abs{\rho}_{C^{s + 2}}$. 
\end{proof}


\section{Analyticity of DNO and Steklov Eigenvalues} \label{sec:DNO} 
This last section is devoted to proving analyticity of DNO and subsequently the analyticity of Steklov eigenvalues, with respect to the perturbation parameter $\vareps$. Recall that the DNO $G_{\rho, \vareps}\colon H^{s + \frac{1}{2}}(\del\Omega_\vareps)\to H^{s - \frac{1}{2}}(\del\Omega_\vareps)$ is given by 
\begin{equation*}
G_{\rho, \vareps}\xi = \n_{\rho, \vareps}\cdot \nabla v\rvert_{r = 1 + \vareps\rho}, 
\end{equation*} 
where $\n_{\rho, \vareps}$ is the unit normal vector of $\del\Omega_\vareps$ and $v$ is the harmonic extension of $\xi$ from $\del\Omega_\vareps$ to $\Omega_\vareps$, satisfying \eqref{eq:HarmExt}. We may represent the hypersurface $\del\Omega_\vareps$ as the zero level set of the implicit function $J$, \ie 
\begin{align*} 
\del\Omega_\vareps = \left\{(r, \hat\theta)\colon J(r, \hat\theta) = \frac{r}{1 + \vareps\rho(\hat\theta)} - 1 = 0\right\}. 
\end{align*} 
A normal vector to $\del\Omega_\vareps$ is given by the gradient $\nabla J$. Using \eqref{eq:SCgrad}, we obtain  
\begin{align*}
\nabla J & = \frac{1}{1 + \vareps\rho}\vec{r} + \frac{1}{r}\sum_{i = 1}^d \frac{1}{\eta_i}\cdot \frac{-\vareps r\del_i\rho}{(1 + \vareps\rho)^2}\, \vec{\theta}_i = \frac{1}{(1 + \vareps\rho)^2}\left[(1 + \vareps\rho)\vec{r} - \vareps\sum_{i = 1}^d \frac{\del_i\rho}{\eta_i}\vec{\theta}_i\right]. 
\end{align*} 
It follows that 
\begin{align*} 
\n_{\rho, \vareps} = \frac{\nabla J}{\|\nabla J\|_2} & = \left((1 + \vareps\rho)^2 + \vareps^2\sum_{i = 1}^d \frac{(\del_i\rho)^2}{\eta_i^2}\right)^{-1/2}\left[(1 + \vareps\rho)\, \vec{r} - \vareps\sum_{i = 1}^d \frac{\del_i\rho}{\eta_i}\vec{\theta}_i\right] \\ 
& = M_{\rho, \vareps}\left[(1 + \vareps\rho)\, \vec{r} - \vareps\sum_{i = 1}^d \frac{\del_i\rho}{\eta_i}\vec{\theta}_i\right] 
\end{align*} 
and the DNO is given by 
\begin{align*} 
G_{\rho, \vareps}\xi & = M_{\rho, \vareps}\left[(1 + \vareps\rho)\del_rv - \frac{\vareps}{r}\sum_{i = 1}^d \frac{\del_i\rho}{\eta_i^2}\del_iv\right]\bigg\rvert_{r = 1 + \vareps\rho}
\end{align*} 

Next we transform the DNO from $\del\Omega_\vareps$ to $S^d$ using the change of variables \eqref{eq:CoV}. Setting 
\begin{equation*} 
u_\vareps(r', \theta') = v\left((1 + \vareps\rho(\theta'))r', \theta'\right) 
\end{equation*} 
and dropping the primes on the transformed variables, we obtain 
\begin{align*} 
G_{\rho, \vareps}\xi & = M_{\rho, \vareps}\left[\del_ru_\vareps - \frac{\vareps}{r(1 + \vareps\rho)}\sum_{i = 1}^d \frac{\del_i\rho}{\eta_i^2}\left(\del_iu_\vareps - \frac{\vareps r\del_i\rho}{(1 + \vareps\rho)}\del_ru_\vareps\right)\right]\bigg\rvert_{r = 1} \\ 
& = M_{\rho, \vareps}\widehat G_{\rho, \vareps}\xi, 
\end{align*} 
where 
\begin{equation} \label{eq:DNOCoV} 
\begin{aligned} 
\widehat G_{\rho, \vareps}\xi & = \del_ru_\vareps(1, \theta) - \frac{\vareps}{(1 + \vareps\rho)}\sum_{i = 1}^d \frac{\del_i\rho}{\eta_i^2}\del_iu_\vareps(1, \theta) + \frac{\vareps^2}{(1 + \vareps\rho)^2}\sum_{i = 1}^d \frac{(\del_i\rho)^2}{\eta_i^2}\del_ru_\vareps(1, \theta). 
\end{aligned} 
\end{equation} 
Since $M_{\rho, \vareps}$ is clearly analytic near $\vareps = 0$, we need only show the analyticity of $\widehat G_{\rho, \vareps}$ near $\vareps = 0$. Multiplying \eqref{eq:DNOCoV} by $(1 + \vareps\rho)^2$ and collecting terms of order $\vareps$ and $\vareps^2$, we see that $\widehat G_{\rho, \vareps}\xi$ satisfies 
\begin{equation} \label{eq:DNOCoV_Rearrange} 
\widehat G_{\rho, \vareps}\xi = \del_ru_\vareps(1, \theta) + \vareps\Big[(T_1u_\vareps)(1, \theta) - 2\rho\widehat G_{\rho, \vareps}\xi\Big] + \vareps^2\Big[(T_2u_\vareps)(1, \theta) - \rho^2\widehat G_{\rho, \vareps}\xi\Big], 
\end{equation} 
where 
\begin{align*} 
T_1u_\vareps & = 2\rho \del_ru_\vareps - \sum_{i = 1}^d \frac{\del_i\rho}{\eta_i^2}\del_iu_\vareps, \\ 
T_2u_\vareps & = \rho^2\del_ru_\vareps - \sum_{i = 1}^d \frac{\rho\del_i\rho}{\eta_i^2}\del_iu_\vareps + \sum_{i = 1}^d \frac{(\del_i\rho)^2}{\eta_i^2}\del_ru_\vareps. 
\end{align*} 

We now formally expand $\widehat G_{\rho, \vareps}$ as a power series in $\vareps$: 
\begin{equation} \label{eq:DNOCoV_PS} 
\widehat G_{\rho, \vareps}\xi = \sum_{n = 0}^\infty \vareps^n\widehat G_{\rho, n}\xi. 
\end{equation} 
Substituting \eqref{eq:DNOCoV_PS} and \eqref{eq:HarmExtPS} into \eqref{eq:DNOCoV_Rearrange} and collecting terms in powers of $\vareps$, we see that $\widehat G_{\rho, 0}$ satisfies 
\begin{equation} \label{eq:DNOCoV_PS0} 
\widehat G_{\rho, 0}\xi = \del_ru_0(1, \theta) 
\end{equation} 
and $\widehat G_{\rho, n}$ for $n = 1, 2, \dots$ satisfies the following recursive formula: 
\begin{equation} \label{eq:DNOCoV_PS1} 
\widehat G_{\rho, n}\xi = \del_ru_n(1, \theta) + (T_1u_{n - 1})(1, \theta) - 2\rho\widehat G_{\rho, n - 1}\xi + (T_2u_{n - 2})(1, \theta) - \rho^2\widehat G_{\rho, n - 2}\xi. 
\end{equation}

The following theorem justifies the convergence of \eqref{eq:DNOCoV_PS} for suitably small $\vareps > 0$ and thereby proves Theorem \ref{thm:DNOAnalytic} for $d\ge 3$. 

\begin{theorem} \label{thm:DNOCoV_Analytic} 
Given $d\ge 3$ and $s\in\N$, if $\rho\in C^{s + 2}(S^d)$ and $\xi\in H^{s + \frac{3}{2}}(S^d)$, then there exists constants $K_1 = K_1(B, d, s) > 0$ and $\alpha = \alpha(d) > 1$ such that 
\begin{equation} \label{eq:DNOCoV_Analytic}
\|\widehat G_{\rho, n}\xi\|_{H^{s + \frac{1}{2}}(S^d)}\le K_1\|\xi\|_{H^{s + \frac{3}{2}}(S^d)}A^n 
\end{equation} 
for $A > \alpha\max\{2K_0C_0\abs{\rho}_{C^{s + 2}}, M\abs{\rho}_{C^{s + 2}}\}$. 
\end{theorem}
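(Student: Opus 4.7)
The plan is to prove \eqref{eq:DNOCoV_Analytic} by strong induction on $n$, using the recursive identities \eqref{eq:DNOCoV_PS0}--\eqref{eq:DNOCoV_PS1}, Theorem \ref{thm:HarmExtTran_Main}, the multiplier estimate \eqref{eq:Multiplier2}, and a standard trace inequality of the form $\|\del_r u(1, \cdot)\|_{H^{s + \frac{1}{2}}(S^d)} \le C_{\tn{tr}}\|u\|_{H^{s + 2}(B)}$ valid for $u \in H^{s + 2}(B)$. For the base case $n = 0$, combining \eqref{eq:DNOCoV_PS0} with the trace inequality and the $n = 0$ case of Theorem \ref{thm:HarmExtTran_Main} immediately yields $\|\widehat G_{\rho, 0}\xi\|_{H^{s + \frac{1}{2}}(S^d)} \le C_{\tn{tr}} K_0 \|\xi\|_{H^{s + \frac{3}{2}}(S^d)}$, so I would set $K_1 = 2C_{\tn{tr}} K_0$ so that the base contribution occupies only $K_1/2$ of the budget on the right-hand side of \eqref{eq:DNOCoV_Analytic}.

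For the inductive step, assuming \eqref{eq:DNOCoV_Analytic} for all indices $m < n$, I would estimate each of the five terms on the right-hand side of \eqref{eq:DNOCoV_PS1} separately. The radial trace $\del_r u_n(1, \theta)$ is bounded directly via the trace inequality and Theorem \ref{thm:HarmExtTran_Main}, giving $C_{\tn{tr}} K_0 \|\xi\|_{H^{s + \frac{3}{2}}(S^d)} A^n$. The ``source'' terms $(T_j u_{n - j})(1, \theta)$ for $j = 1, 2$ involve only first-order derivatives of $u_{n - j}$ multiplied by coefficients built from $\rho$ and its first two derivatives; applying the multiplier estimate \eqref{eq:Multiplier1} (as in the proof of Lemma \ref{lemma:RHS}) shows $T_j u_{n - j} \in H^{s + 1}(B)$ with norm at most $C_j \abs{\rho}_{C^{s + 2}}^j \|u_{n - j}\|_{H^{s + 2}(B)}$, so the trace theorem together with Theorem \ref{thm:HarmExtTran_Main} produces bounds of the form $C_{\tn{tr}} C_j K_0 \abs{\rho}_{C^{s + 2}}^j \|\xi\|_{H^{s + \frac{3}{2}}(S^d)} A^{n - j}$. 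The ``feedback'' terms $\rho\widehat G_{\rho, n - 1}\xi$ and $\rho^2\widehat G_{\rho, n - 2}\xi$ are controlled directly by \eqref{eq:Multiplier2} together with the inductive hypothesis, giving bounds of the form $M^j \abs{\rho}_{C^{s + 2}}^j K_1 \|\xi\|_{H^{s + \frac{3}{2}}(S^d)} A^{n - j}$.

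Summing these contributions and dividing by $\|\xi\|_{H^{s + \frac{3}{2}}(S^d)} A^n$, the inductive step reduces to an inequality of the shape $K_1/2 + O(1/A) + O(1/A^2) \le K_1$, where the $O(1/A)$ and $O(1/A^2)$ numerators are polynomial in $\abs{\rho}_{C^{s + 2}}$ with combinatorial constants depending only on $d$ and $s$. Choosing $A$ larger than an explicit constant $\alpha = \alpha(d) > 1$ times $\max\{2K_0 C_0 \abs{\rho}_{C^{s + 2}}, M\abs{\rho}_{C^{s + 2}}\}$ makes each tail term at most $K_1/8$, simultaneously guaranteeing the hypothesis $A > M\abs{\rho}_{C^{s + 2}}$ needed to invoke Theorem \ref{thm:HarmExtTran_Main}, and so closes the geometric induction.

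The main obstacle is the interplay between two different function spaces: the harmonic-extension contributions naturally live in Sobolev spaces on the ball $B$ (and are controlled by Theorem \ref{thm:HarmExtTran_Main}), while the DNO-feedback contributions live on $S^d$ (and are controlled by the boundary multiplier estimate \eqref{eq:Multiplier2}); the trace theorem is what bridges them. The crucial observation that makes everything fit is that $T_1, T_2$ raise the derivative order on $u$ by only one, so that $T_j u \in H^{s + 1}(B)$ whenever $u \in H^{s + 2}(B)$ and the trace lands precisely in $H^{s + \frac{1}{2}}(S^d)$ as required. Beyond this structural point, the proof is essentially a careful bookkeeping of constants, choosing $K_1$ and $\alpha$ compatibly so that the geometric induction for $\widehat G_{\rho, n}$ runs in parallel with the one already established for $u_n$.
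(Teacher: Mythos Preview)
Your proposal is correct and follows essentially the same approach as the paper: strong induction on $n$ via the recursion \eqref{eq:DNOCoV_PS1}, bounding the five terms with the trace inequality, Theorem \ref{thm:HarmExtTran_Main}, and the multiplier estimates \eqref{eq:Multiplier1}--\eqref{eq:Multiplier2}, then closing by choosing $\alpha$ large enough (depending only on $d$). The only cosmetic differences are that the paper traces the $T_j u_{n-j}$ terms first and then applies \eqref{eq:Multiplier2} on $S^d$ (rather than applying \eqref{eq:Multiplier1} in $H^{s+1}(B)$ before tracing), and it writes $K_1 = \beta C_B K_0$ with $\beta$ left free and optimized alongside $\alpha$ at the end rather than fixing $\beta = 2$ at the outset.
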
 
\begin{proof} 
Throughout this proof we will use two facts: (1) Both $\abs{\rho}_{C^s}$ and $\abs{\rho}_{C^{s + 1}}$ are bounded above by $\abs{\rho}_{C^{s + 2}}$, and (2) if $P$ be any first order differential operator on $S^d$, then the continuity of the trace operator on $H^{s + 1}(B)$ implies the existence of a constant $C_B > 0$ such that 
\begin{equation} \label{eq:Trace} 
\begin{aligned} 
\|Pu_n\|_{H^{s + \frac{1}{2}}(S^d)} \le C_B\|Pu_n\|_{H^{s + 1}(B)} \le C_B\|u_n\|_{H^{s + 2}(B)} \le C_BK_0\|\xi\|_{H^{s + \frac{3}{2}}(S^d)} A^n, 
\end{aligned} 
\end{equation} 
where the last inequality follows from Theorem \ref{thm:HarmExtTran_Main}. 

We proceed by induction. Set $K_1 = \beta C_BK_0$ for some constant $\beta > 1$. The case $n = 0$ follows directly from \eqref{eq:DNOCoV_PS0} and \eqref{eq:Trace}: 
\begin{align*} 
\|\widehat G_{\rho, 0}\xi\|_{H^{s + \frac{1}{2}}(S^d)} & \le C_BK_0\|\xi\|_{H^{s + {\frac{3}{2}}}(S^d)}A^0 < K_1\|\xi\|_{H^{s + {\frac{3}{2}}}(S^d)}A^0. 
\end{align*} 
Now suppose that \eqref{eq:DNOCoV_Analytic} holds for $n < N$. From the recursive formula \eqref{eq:DNOCoV_PS1}, we estimate each term in $\widehat G_{\rho, N}\xi$ individually. The first term can be estimated using \eqref{eq:Trace} with $n = N$: 
\begin{align*} 
\|\del_ru_N\|_{H^{s + \frac{1}{2}}(S^d)} & \le C_BK_0\|\xi\|_{H^{s + \frac{3}{2}}(S^d)}A^N. 
\end{align*} 
To estimate the terms $2\rho\widehat G_{\rho, N - 1}\xi$ and $\rho^2\widehat G_{\rho, N - 2}\xi$, we use the inequality \eqref{eq:Multiplier2} with $\delta = 1/2$ together with the induction hypothesis:  
\begin{align*} 
2\|\rho\widehat G_{\rho, N - 1}\xi\|_{H^{s + \frac{1}{2}}(S^d)} \stackrel{\eqref{eq:Multiplier2}}{\le} 2M\abs{\rho}_{C^{s + 1}}\|\widehat G_{\rho, N - 1}\xi\|_{H^{s + \frac{1}{2}}(S^d)} \le 2M \abs{\rho}_{C^{s + 2}}K_1\|\xi\|_{H^{s + \frac{3}{2}}(S^d)}A^{N - 1} \\ 
\|\rho^2\widehat G_{\rho, N - 2}\xi\|_{H^{s + \frac{1}{2}}(S^d)} \stackrel{\eqref{eq:Multiplier2}}{\le} M^2\abs{\rho}_{C^{s + 1}}^2\|\widehat G_{\rho, N - 2}\xi\|_{H^{s + \frac{1}{2}}(S^d)} \le M^2\abs{\rho}_{C^{s + 2}}^2K_1\|\xi\|_{H^{s + \frac{3}{2}}(S^d)}A^{N - 2}. 
\end{align*} 
To estimate the terms $T_1u_{N - 1}(1, \theta)$ and $T_2u_{N - 1}(1, \theta)$, we observe that the differential operators acting on $u_{N - 1}$ and $u_{N-2}$ are first order on $S^d$. Using the inequality \eqref{eq:Multiplier2} with $\delta = 1/2$ and \eqref{eq:Trace}, we obtain 
\begin{align*} 
\|T_1u_{N - 1}\|_{H^{s + \frac{1}{2}}(S^d)} & \le \|2\rho\del_ru_{N - 1}\|_{H^{s + \frac{1}{2}}(S^d)} + \sum_{i = 1}^d \left\|\frac{\del_i\rho}{\eta_i}\cdot \frac{\del_iu_{N - 1}}{\eta_i}\right\|_{H^{s + \frac{1}{2}}(S^d)} \\ 
& \stackrel{\eqref{eq:Multiplier2}}{\le} 2M\abs{\rho}_{C^{s + 1}}\|\del_ru_{N - 1}\|_{H^{s + \frac{1}{2}}(S^d)} + \sum_{i = 1}^d M\abs{\rho}_{C^{s + 2}}\|\eta_i^{-1}\del_iu_{N - 1}\|_{H^{s + \frac{1}{2}}(S^d)} \\ 
& \stackrel{\eqref{eq:Trace}}{\le} (2 + d)M\abs{\rho}_{C^{s + 2}}C_BK_0\|\xi\|_{H^{s + \frac{3}{2}}(S^d)}A^{N - 1} \\ 
\|T_2u_{N - 2}\|_{H^{s + \frac{1}{2}}(S^d)} & \le \|\rho^2\del_ru_{N - 2}\|_{H^{s + \frac{1}{2}}(S^d)} + \sum_{i = 1}^d \left\|\rho\cdot \frac{\del_i\rho}{\eta_i}\cdot \frac{\del_iu_{N - 2}}{\eta_i}\right\|_{H^{s + \frac{1}{2}}(S^d)} \\ 
& \qquad + \sum_{i = 1}^d \left\|\left(\frac{\del_i\rho}{\eta_i}\right)^2\del_ru_{N - 2}\right\|_{H^{s + \frac{1}{2}}(S^d)} \\ 
& \stackrel{\eqref{eq:Multiplier2}}{\le} M^2\abs{\rho}_{C^{s + 1}}^2\|\del_ru_{N -2}\|_{H^{s + \frac{1}{2}}(S^d)} + \sum_{i = 1}^d M^2\abs{\rho}_{C^{s + 1}}\abs{\rho}_{C^{s + 2}}\|\eta_i^{-1}\del_iu_{N - 2}\|_{H^{s + \frac{1}{2}}(S^d)} \\ 
& \qquad + \sum_{i = 1}^d M^2\abs{\rho}_{C^{s + 2}}^2\|\del_ru_{N - 2}\|_{H^{s + \frac{1}{2}}(S^d)} \\ 
& \stackrel{\eqref{eq:Trace}}{\le} (1 + 2d)M^2\abs{\rho}_{C^{s + 2}}^2C_BK_0\|\xi\|_{H^{s + \frac{3}{2}}(S^d)}A^{N - 2}. 
\end{align*} 
Adding all the estimates for the five terms in $\widehat G_{\rho, N}\xi$ and using the assumption $A > \alpha M\abs{\rho}_{C^{s + 2}}$ for some constant $\alpha > 1$, we obtain 
\begin{align*} 
\|\widehat G_{\rho, N}\xi\|_{H^{s + \frac{1}{2}}(S^d)} & \le C_BK_0\|\xi\|_{H^{s + \frac{3}{2}}(S^d)}A^N \\ 
& \qquad + (2 + d)M\abs{\rho}_{C^{s + 2}}C_BK_0\|\xi\|_{H^{s + \frac{3}{2}}(S^d)}A^{N - 1} \\ 
& \qquad + 2M \abs{\rho}_{C^{s + 2}}K_1\|\xi\|_{H^{s + \frac{3}{2}}(S^d)}A^{N - 1} \\ 
& \qquad + (1 + 2d)M^2\abs{\rho}_{C^{s + 2}}^2C_BK_0\|\xi\|_{H^{s + \frac{3}{2}}(S^d)}A^{N - 2} \\ 
& \qquad + M^2\abs{\rho}_{C^{s + 2}}^2K_1\|\xi\|_{H^{s + \frac{3}{2}}(S^d)}A^{N - 2} \\ 
& \le C_BK_0\left[1 + \frac{2 + d}{\alpha} + \frac{1 + 2d}{\alpha^2}\right]\|\xi\|_{H^{s + \frac{3}{2}}(S^d)}A^N + K_1\left[\frac{2}{\alpha} + \frac{1}{\alpha^2}\right] \|\xi\|_{H^{s + \frac{3}{2}}(S^d)}A^N \\ 
& = C(\alpha, \beta) K_1\|\xi\|_{H^{s + \frac{3}{2}}(S^d)}A^N, 
\end{align*} 
where we use $K_1 = \beta C_BK_0$ for $\beta > 1$ for the equality and 
\begin{align*} 
C(\alpha, \beta) & = \frac{1}{\beta}\left[1 + \frac{2 + d}{\alpha} + \frac{1 + 2d}{\alpha^2}\right] + \frac{2}{\alpha} + \frac{1}{\alpha^2}. 
\end{align*} 
Choosing $\alpha > 1 + \sqrt{2} > 1$ and $\beta > \frac{\alpha^2 + \alpha(2 + d) + 2d + 1}{\alpha^2 - 2\alpha - 1} > 1$, we obtain $C(\alpha, \beta)\le 1$ and so 
\begin{align*} 
\|\widehat G_{\rho, N}\xi\|_{H^{s + \frac{1}{2}}(S^d)}\le K_1\|\xi\|_{H^{s + \frac{3}{2}}(S^d)}A^N, 
\end{align*} 
thereby completing the inductive argument for $n = N$. 
\end{proof}

\begin{proof}[Proof of Theorem \ref{thm:DNOAnalytic} and Corollary \ref{thm:Steklov}.]
The proof is identical to \cite[Corollary~1.2]{viator2020}, but we include it here for completeness. Theorem \ref{thm:DNOCoV_Analytic} shows that the expansion of the non-normalized DNO $\widehat G_{\rho, \varepsilon}$ given in \eqref{eq:DNOCoV_PS} is uniformly convergent for small $\varepsilon$. It follows that $G_{\rho, \varepsilon} \colon H^{s+\frac 3 2}(S^d) \to H^{s + \frac 1 2}(S^d)$ is analytic for small $\varepsilon$.  The DNO operator $ G_{\rho, \varepsilon}\colon L^2(S^d) \to L^2(S^d)$ is self-adjoint \cite{arendt2014}, hence closed. 
The result now follows from \cite[Ch. 7, Thm 1.8, p. 370]{Kato}. 
\end{proof}

\subsubsection*{Acknowledgements.} The authors would like to thank John Gemmer, Jason Parsley, and Nsoki Mavinga for helpful insights.

\printbibliography 

\end{document}